\newtheorem{theorem}{Theorem}
\newtheorem{corollary}[theorem]{Corollary}
\newtheorem{definition}[theorem]{Definition}
\newtheorem{lemma}[theorem]{Lemma}
\newenvironment{proof}[1][Proof]{\noindent\textbf{#1.} }{\ \rule{0.5em}{0.5em}}
\begin{document}

\title{On some subclasses of m-fold symmetric bi-univalent functions}
\author{\c{S}ahsene Alt\i nkaya$^{\ast }$ and Sibel Yal\c{c}\i n$^{1}$ \\
%EndAName
$^{\ast ,1}$Department of Mathematics, Faculty of Arts and Science,\\
Uludag University, Bursa, Turkey.\\
$^{\ast }$corresponding author e-mail: sahsene@uludag.edu.tr\\
$^{1}$syalcin@uludag.edu.tr}
\maketitle

\begin{abstract}
In this work, we introduce two new subclasses $S_{\Sigma _{m}}(\alpha
,\lambda )$ and $S_{\Sigma _{m}}(\beta ,\lambda )$ of $\Sigma _{m}$
consisting of analytic and \textit{m}-fold symmetric bi-univalent functions
in the open unit disk $U$. Furthermore, for functions in each of the
subclasses introduced in this paper, we obtain the coefficient bounds for $%
\left\vert a_{m+1}\right\vert $ and $\left\vert a_{2m+1}\right\vert .$

Keywords: Analytic functions, \textit{m}-fold symmetric bi-univalent
functions, Coefficient bounds.

2010, Mathematics Subject Classification: 30C45, 30C50.
\end{abstract}

\section{Introduction}

Let $A$ denote the class of functions of the form 
\begin{equation}
f(z)=z+\overset{\infty }{\underset{n=2}{\sum }}a_{n}z^{n}  \label{eq1}
\end{equation}%
which are analytic in the open unit disk $U=\left\{ z:\left\vert
z\right\vert <1\right\} $, and let $S$ be the subclass of $A$ consisting of
the form (\ref{eq1}) which are also univalent in $U.$

The Koebe one-quarter theorem \cite{Duren 83} states that the image of $U$
under every function $f$ \ from $S$ contains a disk of radius $\frac{1}{4}.$
Thus every such univalent function has an inverse $f^{-1}$ which satisfies%
\begin{equation*}
f^{-1}\left( f\left( z\right) \right) =z~~\left( z\in U\right)
\end{equation*}%
and%
\begin{equation*}
f\left( f^{-1}\left( w\right) \right) =w~~\left( \left\vert w\right\vert
<r_{0}\left( f\right) ~,~r_{0}\left( f\right) \geq \frac{1}{4}\right) ,
\end{equation*}%
where%
\begin{equation}
f^{-1}\left( w\right) =w~-a_{2}w^{2}+\left( 2a_{2}^{2}-a_{3}\right)
w^{3}-\left( 5a_{2}^{3}-5a_{2}a_{3}+a_{4}\right) w^{4}+\cdots .  \label{eq2}
\end{equation}

A function $f\in A$ is said to be bi-univalent in $U$ if both $f$ and $%
f^{-1} $ are univalent in $U.~$Let $\Sigma $ denote the class of
bi-univalent functions defined in the unit disk $U.$

For a brief history and interesting examples in the class $\Sigma ,$ see 
\cite{Srivastava 2010}. Examples of functions in the class $\Sigma $ are%
\begin{equation*}
\frac{z}{1-z},\ \ -\log (1-z),\ \ \ \frac{1}{2}\log \left( \frac{1+z}{1-z}%
\right)
\end{equation*}%
and so on. However, the familier Koebe function is not a member of $\Sigma .$
Other common examples of functions in $S$ such as%
\begin{equation*}
z-\frac{z^{2}}{2}\text{ and }\frac{z}{1-z^{2}}
\end{equation*}%
are also not members of $\Sigma $ (see \cite{Srivastava 2010}).

For each function $f\in S$, the function%
\begin{equation}
h(z)=\sqrt[m]{f(z^{m})}\ \ \ \ \ \ \ \ \ \ \ \ \ (z\in U,\ \ m\in 
%TCIMACRO{\U{2115} }%
%BeginExpansion
\mathbb{N}
%EndExpansion
)  \label{eq3}
\end{equation}%
is univalent and maps the unit disk $U$ into a region with \textit{m}-fold
symmetry. A function is said to be \textit{m}-fold symmetric (see \cite%
{Koepf 89}, \cite{Pommerenke 75}) if it has the following normalized form:%
\begin{equation}
f(z)=z+\overset{\infty }{\underset{k=1}{\sum }}a_{mk+1}z^{mk+1}\ \ \ \ \ \ \
(z\in U,\ \ m\in 
%TCIMACRO{\U{2115} }%
%BeginExpansion
\mathbb{N}
%EndExpansion
).  \label{eq4}
\end{equation}

We denote by $S_{m}$ the class of \textit{m}-fold symmetric univalent
functions in U, which are normalized by the series expansion (\ref{eq4}). In
fact, the functions in the class $S$ are \textit{one}-fold symmetric.

Analogous to the concept of \textit{m}-fold symmetric univalent functions,
we here introduced the concept of \textit{m}-fold symmetric bi-univalent
functions. Each function $f\in \Sigma $ generates an \textit{m}-fold
symmetric bi-univalent function for each integer $m\in 
%TCIMACRO{\U{2115} }%
%BeginExpansion
\mathbb{N}
%EndExpansion
$. The normalized form of f is given as in (\ref{eq4}) and the series
expansion for $f^{-1},$ which has been recently proven by Srivastava et al. 
\cite{Srivastava 2014}, is given as follows:%
\begin{eqnarray}
g\left( w\right) &=&w~-a_{m+1}w^{m+1}+\left[ (m+1)a_{m+1}^{2}-a_{2m+1}\right]
w^{2m+1}  \label{eq5} \\
&&-\left[ \frac{1}{2}(m+1)(3m+2)a_{m+1}^{3}-(3m+2)a_{m+1}a_{2m+1}+a_{3m+1}%
\right] w^{3m+1}+\cdots .  \notag
\end{eqnarray}%
where $f^{-1}=g.$ We denote by $\Sigma _{m}$ the class of \textit{m}-fold
symmetric bi-univalent functions in $U$. For $m=1$, the formula (\ref{eq5})
coincides with the formula (\ref{eq2}) of the class $\Sigma $. Some examples
of \textit{m}-fold symmetric bi-univalent functions are given as follows:%
\begin{equation*}
\left( \frac{z^{m}}{1-z^{m}}\right) ^{m},\ \ \left[ -\log (1-z^{m})\right] ^{%
\frac{1}{m}},\ \ \ \left[ \frac{1}{2}\log \left( \frac{1+z^{m}}{1-z^{m}}%
\right) ^{\frac{1}{m}}\right] .
\end{equation*}

Brannan and Taha \cite{Brannan and Taha 86} introduced certain subclasses of
the bi-univalent function class $\Sigma $ similar to the familiar
subclasses. $S^{\star }\left( \beta \right) $ and $K\left( \beta \right) $
of starlike and convex function of order $\beta $ $\left( 0\leq \beta
<1\right) $ respectively (see \cite{Netanyahu 69}). The classes $S_{\Sigma
}^{\star }\left( \alpha \right) $ and $K_{\Sigma }\left( \alpha \right) $ of
bi-starlike functions of order $\alpha $ and bi-convex functions of order $%
\alpha ,$ corresponding to the function classes $S^{\star }\left( \alpha
\right) $ and $K\left( \alpha \right) ,$ were also introduced analogously.
For each of the function classes $S_{\Sigma }^{\star }\left( \alpha \right) $
and $K_{\Sigma }\left( \alpha \right) ,$ they found non-sharp estimates on
the initial coefficients. In fact, the aforecited work of Srivastava et al. 
\cite{Srivastava 2010} essentially revived the investigation of various
subclasses of the bi-univalent function class $\Sigma $ in recent years.
Recently, many authors investigated bounds for various subclasses of
bi-univalent functions (\cite{Altinkaya and Yalcin 2014}, \cite{Altinkaya
and Yalcin 2015}, \cite{Frasin 2011}, \cite{Magesh and Yamini 2013}, \cite%
{Srivastava 2010}, \cite{Srivastava 2013},). Not much is known about the
bounds on the general coefficient$\ \left\vert a_{n}\right\vert $ for $n\geq
4.$ In the literature, there are only a few works determining the general
coefficient bounds $\left\vert a_{n}\right\vert $ for the analytic
bi-univalent functions (\cite{Altinkaya and Yalcin 2015a}, \cite{Bulut 2014}%
, \cite{Hamidi and Jahangiri 2014}). The coefficient estimate problem for
each of $\left\vert a_{n}\right\vert $ $\left( \ n\in 
%TCIMACRO{\U{2115} }%
%BeginExpansion
\mathbb{N}
%EndExpansion
\backslash \left\{ 1,2\right\} ;\ \ 
%TCIMACRO{\U{2115} }%
%BeginExpansion
\mathbb{N}
%EndExpansion
=\left\{ 1,2,3,...\right\} \right) $ is still an open problem.

The aim of the this paper is to introduce two new subclasses of the function
class $\Sigma _{m}$ and derive estimates on the initial coefficients $%
\left\vert a_{m+1}\right\vert $ and $\left\vert a_{2m+1}\right\vert $ for
functions in these new subclasses. We have to remember here the following
lemma here so as to derive our basic results:

\begin{lemma}
\cite{Pommerenke 75} If $p\left( z\right)
=1+p_{1}z+p_{2}z^{2}+p_{3}z^{3}+\cdots \ \ $is an analytic function in $U$
with positive real part, then%
\begin{equation*}
\left\vert p_{n}\right\vert \leq 2~\ \ \ \ \ \left( n\in 
%TCIMACRO{\U{2115} }%
%BeginExpansion
\mathbb{N}
%EndExpansion
=\left\{ 1,2,\ldots \right\} \right)
\end{equation*}%
and%
\begin{equation*}
\left\vert p_{2}-\frac{p_{1}^{2}}{2}\right\vert \leq 2-\frac{\left\vert
p_{1}\right\vert ^{2}}{2}.\ 
\end{equation*}
\end{lemma}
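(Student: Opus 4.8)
The plan is to recognize this statement as the classical Carath\'eodory lemma together with a standard refinement, and to derive both inequalities from the Herglotz representation of functions with positive real part. Since $p$ is analytic in $U$ with $p(0)=1$ and $\operatorname{Re}p(z)>0$, there is a probability measure $\mu$ on the unit circle $|\zeta|=1$ such that
\begin{equation*}
p(z)=\int_{|\zeta|=1}\frac{\zeta+z}{\zeta-z}\,d\mu(\zeta),\qquad z\in U.
\end{equation*}
Expanding the Herglotz kernel as $\frac{\zeta+z}{\zeta-z}=1+2\sum_{n=1}^{\infty}\zeta^{-n}z^{n}$ and matching with $p(z)=1+\sum p_{n}z^{n}$ gives the moment formula $p_{n}=2\int_{|\zeta|=1}\zeta^{-n}\,d\mu(\zeta)$. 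With this in hand the first inequality is immediate: since $\mu$ is a probability measure and $|\zeta^{-n}|=1$ on the circle,
\begin{equation*}
|p_{n}|=2\left|\int_{|\zeta|=1}\zeta^{-n}\,d\mu\right|\le 2\int_{|\zeta|=1}|\zeta|^{-n}\,d\mu=2.
\end{equation*}

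For the second inequality I would write $m_{k}=\int_{|\zeta|=1}\zeta^{-k}\,d\mu$, so that $p_{1}=2m_{1}$ and $p_{2}=2m_{2}$, whence $p_{2}-\tfrac12 p_{1}^{2}=2(m_{2}-m_{1}^{2})$ and $2-\tfrac12|p_{1}|^{2}=2(1-|m_{1}|^{2})$. The key observation, and the one step that takes a little care, is the algebraic identity
\begin{equation*}
m_{2}-m_{1}^{2}=\int_{|\zeta|=1}(\zeta^{-1}-m_{1})^{2}\,d\mu,
\end{equation*}
obtained by expanding the square and using $\int\zeta^{-2}\,d\mu=m_{2}$ and $\int\zeta^{-1}\,d\mu=m_{1}$. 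Taking absolute values and pulling them inside the integral then yields
\begin{equation*}
|m_{2}-m_{1}^{2}|\le\int_{|\zeta|=1}|\zeta^{-1}-m_{1}|^{2}\,d\mu=1-|m_{1}|^{2},
\end{equation*}
where the final equality comes from expanding $|\zeta^{-1}-m_{1}|^{2}$ and again using $|\zeta^{-1}|=1$ together with the moment identities. Multiplying by $2$ gives $|p_{2}-\tfrac12 p_{1}^{2}|\le 2-\tfrac12|p_{1}|^{2}$, as required.

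I expect the main obstacle to be precisely the passage from $m_{2}-m_{1}^{2}$ to the ``variance'' form above; once that identity is spotted the estimate is a one-line application of the triangle inequality. An alternative route, avoiding measure theory entirely, is to introduce the Schwarz function $\omega(z)=\frac{p(z)-1}{p(z)+1}=c_{1}z+c_{2}z^{2}+\cdots$, which satisfies $\omega(0)=0$ and $|\omega(z)|<1$ in $U$ because $p$ maps $U$ into the right half-plane. Expanding $p=\frac{1+\omega}{1-\omega}$ gives $p_{1}=2c_{1}$ and $p_{2}=2c_{1}^{2}+2c_{2}$, so that $p_{2}-\tfrac12 p_{1}^{2}=2c_{2}$, and the conclusion follows from the classical Schwarz-lemma coefficient bound $|c_{2}|\le 1-|c_{1}|^{2}$, since $2-\tfrac12|p_{1}|^{2}=2(1-|c_{1}|^{2})$.
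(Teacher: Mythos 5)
Your proof is correct, but there is nothing in the paper to compare it against: the paper states this lemma with a citation to Pommerenke's book and offers no proof of its own, using the two inequalities only as imported tools in Theorems 3 and 5. Your Herglotz route is sound in every step: the moment formula $p_{n}=2\int_{|\zeta|=1}\zeta^{-n}\,d\mu(\zeta)$ immediately gives the Carath\'eodory bound $|p_{n}|\le 2$, and your ``variance'' identity $m_{2}-m_{1}^{2}=\int(\zeta^{-1}-m_{1})^{2}\,d\mu$ checks out by direct expansion, as does the evaluation $\int|\zeta^{-1}-m_{1}|^{2}\,d\mu=1-|m_{1}|^{2}$ (using $|\zeta|=1$ and $\int\zeta^{-1}\,d\mu=m_{1}$), so the triangle inequality under the integral sign delivers $|p_{2}-\tfrac12 p_{1}^{2}|\le 2-\tfrac12|p_{1}|^{2}$ exactly. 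Your alternative via the Schwarz function $\omega=(p-1)/(p+1)$, with $p_{1}=2c_{1}$, $p_{2}=2c_{1}^{2}+2c_{2}$, so that $p_{2}-\tfrac12 p_{1}^{2}=2c_{2}$ and the refinement becomes the classical bound $|c_{2}|\le 1-|c_{1}|^{2}$, is also correct and is in fact the argument one typically finds in the standard references such as Pommerenke; it has the merit of making transparent why the refined inequality is just the second Schwarz--Pick coefficient estimate in disguise, while the Herglotz version makes the probabilistic (variance) structure explicit and generalizes more readily to higher-order functionals. Either version would serve as a complete, self-contained proof of the cited lemma.
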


\section{Coefficient bounds for the function class $S_{\Sigma _{m}}(\protect%
\alpha ,\protect\lambda )$}

\begin{definition}
A function $f\in \Sigma _{m}$ is said to be in the class $S_{\Sigma
_{m}}(\alpha ,\lambda )$ if the following conditions are satisfied:%
\begin{equation*}
\begin{array}{cc}
\left\vert \arg \left[ \frac{1}{2}\left( \frac{zf^{\prime }(z)}{f(z)}+\left( 
\frac{zf^{\prime }(z)}{f(z)}\right) ^{\frac{1}{\lambda }}\right) \right]
\right\vert <\dfrac{\alpha \pi }{2} & \left( 0<\alpha \leq 1,\ 0<\lambda
\leq 1,\ \ z\in U\right)%
\end{array}%
\ 
\end{equation*}%
and%
\begin{equation*}
\begin{array}{cc}
\left\vert \arg \left[ \frac{1}{2}\left( \frac{wg^{\prime }(w)}{g(w)}+\left( 
\frac{wg^{\prime }(w)}{g(w)}\right) ^{\frac{1}{\lambda }}\right) \right]
\right\vert <\dfrac{\alpha \pi }{2} & \left( 0<\alpha \leq 1,\ 0<\lambda
\leq 1,\ w\in U\right)%
\end{array}%
\end{equation*}%
where the function $g=f^{-1}.$
\end{definition}

\begin{theorem}
Let $\ f$ given by (\ref{eq4}) be in the class $S_{\Sigma _{m}}(\alpha
,\lambda ),\ 0<\alpha \leq 1.$ Then 
\begin{equation*}
\left\vert a_{m+1}\right\vert \leq \dfrac{4\lambda \alpha }{m\sqrt{%
(1+\lambda )\left[ 4\lambda \alpha +(1+\lambda )(1-\alpha )\right] +2\alpha
(1-\lambda )}}
\end{equation*}%
and%
\begin{equation*}
\left\vert a_{2m+1}\right\vert \leq \frac{2\lambda \alpha }{m\left(
1+\lambda \right) }+\dfrac{8(m+1)\lambda ^{2}\alpha ^{2}}{m^{2}(1+\lambda
)^{2}}.
\end{equation*}
\end{theorem}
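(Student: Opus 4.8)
The plan is to convert the two argument conditions defining $S_{\Sigma _{m}}(\alpha ,\lambda )$ into algebraic equalities and then read off the initial coefficients by matching power series. A function $\Phi$ with $\Phi (0)=1$ and $\left\vert \arg \Phi \right\vert <\frac{\alpha \pi }{2}$ is exactly one of the form $\Phi =[\phi ]^{\alpha }$ with $\phi $ analytic, $\phi (0)=1$, and $\operatorname{Re}\phi >0$. So first I would write
\[
\tfrac{1}{2}\Big(\tfrac{zf'(z)}{f(z)}+\big(\tfrac{zf'(z)}{f(z)}\big)^{1/\lambda }\Big)=[p(z)]^{\alpha },\qquad \tfrac{1}{2}\Big(\tfrac{wg'(w)}{g(w)}+\big(\tfrac{wg'(w)}{g(w)}\big)^{1/\lambda }\Big)=[q(w)]^{\alpha },
\]
where $g=f^{-1}$. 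Because $f$ and $g$ are $m$-fold symmetric, the bracketed expressions are power series in $z^{m}$ and $w^{m}$, so $p$ and $q$ may be taken in the form $p(z)=1+p_{1}z^{m}+p_{2}z^{2m}+\cdots $ and $q(w)=1+q_{1}w^{m}+q_{2}w^{2m}+\cdots $. Viewing these as functions of $z^{m}$ (resp.\ $w^{m}$) is what lets me apply the Lemma termwise to get $\left\vert p_{n}\right\vert \leq 2$ and $\left\vert q_{n}\right\vert \leq 2$.

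Next I would expand everything to order $z^{2m}$. A short division using $f(z)=z+a_{m+1}z^{m+1}+a_{2m+1}z^{2m+1}+\cdots $ gives
\[
\tfrac{zf'(z)}{f(z)}=1+ma_{m+1}z^{m}+\big(2ma_{2m+1}-ma_{m+1}^{2}\big)z^{2m}+\cdots .
\]
I would then form $\big(\tfrac{zf'}{f}\big)^{1/\lambda }$ by the binomial series, average it with $\tfrac{zf'}{f}$, and compare with $[p(z)]^{\alpha }=1+\alpha p_{1}z^{m}+\big(\alpha p_{2}+\tfrac{\alpha (\alpha -1)}{2}p_{1}^{2}\big)z^{2m}+\cdots $. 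Matching the coefficients of $z^{m}$ and $z^{2m}$ yields two equations; running the identical computation for $g$, whose coefficients are $-a_{m+1}$ and $(m+1)a_{m+1}^{2}-a_{2m+1}$ by (\ref{eq5}), produces two more in terms of $q_{1},q_{2}$.

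The coefficients of $z^{m}$ give $\tfrac{m(1+\lambda )}{2\lambda }a_{m+1}=\alpha p_{1}$ and $-\tfrac{m(1+\lambda )}{2\lambda }a_{m+1}=\alpha q_{1}$, so $q_{1}=-p_{1}$ and $a_{m+1}=\tfrac{2\lambda \alpha p_{1}}{m(1+\lambda )}$. To bound $\left\vert a_{m+1}\right\vert $ I would \emph{add} the two $z^{2m}$-equations: the $a_{2m+1}$ terms cancel, leaving a relation of the shape $(\text{const})\,m^{2}a_{m+1}^{2}=\alpha (p_{2}+q_{2})+\alpha (\alpha -1)p_{1}^{2}$. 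Eliminating $p_{1}^{2}$ through $a_{m+1}=\tfrac{2\lambda \alpha p_{1}}{m(1+\lambda )}$ expresses $a_{m+1}^{2}$ as a multiple of $p_{2}+q_{2}$, and $\left\vert p_{2}+q_{2}\right\vert \leq 4$ then delivers the first estimate. For $\left\vert a_{2m+1}\right\vert $ I would instead \emph{subtract} the two $z^{2m}$-equations, which removes the pure $a_{m+1}^{2}$ and the $p_{1}^{2}$ contributions and leaves $a_{2m+1}=\tfrac{m+1}{2}a_{m+1}^{2}+\tfrac{\alpha \lambda }{2m(1+\lambda )}(p_{2}-q_{2})$; substituting $a_{m+1}^{2}=\tfrac{4\lambda ^{2}\alpha ^{2}p_{1}^{2}}{m^{2}(1+\lambda )^{2}}$ and using $\left\vert p_{1}\right\vert \leq 2$, $\left\vert p_{2}-q_{2}\right\vert \leq 4$ yields the second estimate.

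The main obstacle is purely computational: carrying the binomial expansion of $\big(\tfrac{zf'}{f}\big)^{1/\lambda }$ correctly to second order, and then checking that the factor appearing under the square root collapses to the stated symmetric form. Concretely, the ``add'' step produces the coefficient $2\alpha (2\lambda ^{2}+\lambda +1)-(\alpha -1)(1+\lambda )^{2}$, and the crux is the identity
\[
2\alpha (2\lambda ^{2}+\lambda +1)-(\alpha -1)(1+\lambda )^{2}=(1+\lambda )\big[4\lambda \alpha +(1+\lambda )(1-\alpha )\big]+2\alpha (1-\lambda ),
\]
after which $\left\vert a_{m+1}\right\vert \leq \tfrac{4\lambda \alpha }{m\sqrt{\,\cdot \,}}$ falls out. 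Apart from this bookkeeping and the initial justification that $p$ and $q$ are genuine power series in $z^{m}$, every remaining step is routine coefficient comparison.
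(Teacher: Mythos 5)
Your proposal is correct and follows essentially the same route as the paper's proof: the same substitutions $[p(z)]^{\alpha}$, $[q(w)]^{\alpha}$, the same four coefficient equations (your $p_{1},p_{2}$ are the paper's $p_{m},p_{2m}$), adding the $z^{2m}$-equations and eliminating $p_{1}^{2}$ via the $z^{m}$-equation to bound $\left\vert a_{m+1}\right\vert$, then subtracting them (so that $p_{1}^{2}-q_{1}^{2}=0$) to bound $\left\vert a_{2m+1}\right\vert$, with Lemma 1 applied exactly as in the paper. Your explicit verification of the identity $2\alpha (2\lambda ^{2}+\lambda +1)-(\alpha -1)(1+\lambda )^{2}=(1+\lambda )\left[ 4\lambda \alpha +(1+\lambda )(1-\alpha )\right] +2\alpha (1-\lambda )$ and of the passage from the argument condition to the representation $[p(z)]^{\alpha}$ are details the paper leaves implicit, but they are correct and change nothing structurally.
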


\begin{proof}
Let $\ f\in S_{\Sigma _{m}}(\alpha ,\lambda ).$ Then 
\begin{equation}
\frac{1}{2}\left( \frac{zf^{\prime }(z)}{f(z)}+\left( \frac{zf^{\prime }(z)}{%
f(z)}\right) ^{\frac{1}{\lambda }}\right) =\left[ p(z)\right] ^{\alpha }
\label{eq6}
\end{equation}%
\begin{equation}
\frac{1}{2}\left( \frac{wg^{\prime }(w)}{g(w)}+\left( \frac{wg^{\prime }(w)}{%
g(w)}\right) ^{\frac{1}{\lambda }}\right) =\left[ q(w)\right] ^{\alpha }
\label{eq7}
\end{equation}%
where $g=f$ $^{-1}$, $p,q$ in $P$ and have the forms 
\begin{equation*}
p(z)=1+p_{m}z^{m}+p_{2m}z^{2m}+\cdots
\end{equation*}%
and%
\begin{equation*}
q(w)=1+q_{m}w^{m}+q_{2m}w^{2m}+\cdots .
\end{equation*}%
Now, equating the coefficients in (\ref{eq6}) and (\ref{eq7}), we get%
\begin{equation}
\frac{m(1+\lambda )}{2\lambda }a_{m+1}=\alpha p_{m},  \label{eq8}
\end{equation}%
\begin{equation}
\frac{m(1+\lambda )}{2\lambda }\left( 2a_{2m+1}-a_{m+1}^{2}\right) +\frac{%
m^{2}(1-\lambda )}{4\lambda ^{2}}a_{m+1}^{2}=\alpha p_{2m}+\tfrac{\alpha
(\alpha -1)}{2}p_{m}^{2},  \label{eq9}
\end{equation}%
and%
\begin{equation}
-\frac{m(1+\lambda )}{2\lambda }a_{m+1}=\alpha q_{m},  \label{eq10}
\end{equation}%
\begin{equation}
\frac{m(1+\lambda )}{2\lambda }\left[ (2m+1)a_{m+1}^{2}-2a_{2m+1}\right] +%
\frac{m^{2}(1-\lambda )}{4\lambda ^{2}}a_{m+1}^{2}=\alpha q_{2m}+\tfrac{%
\alpha (\alpha -1)}{2}q_{m}^{2}.  \label{eq11}
\end{equation}%
From (\ref{eq8}) and (\ref{eq10}) we obtain%
\begin{equation}
p_{m}=-q_{m}.  \label{eq12}
\end{equation}%
and%
\begin{equation}
\frac{m^{2}(1+\lambda )^{2}}{2\lambda ^{2}}a_{m+1}^{2}=\alpha
^{2}(p_{m}^{2}+q_{m}^{2}).  \label{eq13}
\end{equation}%
Also form (\ref{eq9}), (\ref{eq11}) and (\ref{eq13}) we have 
\begin{equation*}
\begin{array}{l}
\left[ \frac{m^{2}(1+\lambda )}{\lambda }+\frac{m^{2}\left( 1-\lambda
\right) }{2\lambda ^{2}}\right] a_{m+1}^{2}=\alpha \left(
p_{2m}+q_{2m}\right) +\frac{\alpha (\alpha -1)}{2}(p_{m}^{2}+q_{m}^{2}). \\ 
\\ 
\ \ \ \ \ \ \ \ \ \ \ \ \ \ \ \ \ \ \ \ \ \ \ \ \ \ \ \ \ \ \ \ \ \ \
=\alpha \left( p_{2m}+q_{2m}\right) +\frac{\alpha (\alpha -1)}{2}\frac{%
m^{2}(1+\lambda )^{2}}{2\lambda ^{2}\alpha ^{2}}a_{m+1}^{2}.%
\end{array}%
\end{equation*}%
Therefore, we have%
\begin{equation}
a_{m+1}^{2}=\dfrac{4\lambda ^{2}\alpha ^{2}\left( p_{2m}+q_{2m}\right) }{%
m^{2}\left\{ (1+\lambda )\left[ 4\lambda \alpha +(1+\lambda )(1-\alpha )%
\right] +2\alpha (1-\lambda )\right\} }.  \label{eq14}
\end{equation}%
Appying Lemma 1 for the coefficients $p_{2m}$ and $q_{2m}$, we obtain%
\begin{equation*}
\left\vert a_{m+1}\right\vert \leq \dfrac{4\lambda \alpha }{m\sqrt{%
(1+\lambda )\left[ 4\lambda \alpha +(1+\lambda )(1-\alpha )\right] +2\alpha
(1-\lambda )}}.
\end{equation*}%
Next, in order to find the bound on $\left\vert a_{2m+1}\right\vert ,$ by
subtracting (\ref{eq11}) from (\ref{eq9}), we obtain%
\begin{equation*}
\dfrac{2m(1+\lambda )}{\lambda }a_{2m+1}-\dfrac{m(m+1)(1+\lambda )}{\lambda }%
a_{m+1}^{2}=\alpha \left( p_{2m}-q_{2m}\right) +\tfrac{\alpha (\alpha -1)}{2}%
(p_{m}^{2}-q_{m}^{2}).
\end{equation*}%
Then, in view of (\ref{eq12}) and (\ref{eq13}) , and appying Lemma 1 for the
coefficients $p_{2m},p_{m}$ and $q_{2m},q_{m\text{ ,}}$we have%
\begin{equation*}
\left\vert a_{2m+1}\right\vert \leq \frac{2\lambda \alpha }{m\left(
1+\lambda \right) }+\dfrac{8(m+1)\lambda ^{2}\alpha ^{2}}{m^{2}(1+\lambda
)^{2}}.
\end{equation*}%
This completes the proof of Theorem 3.
\end{proof}

\section{Coefficient bounds for the function class $S_{\Sigma _{m}}(\protect%
\beta ,\protect\lambda )$}

\begin{definition}
A function $f\in \Sigma _{m}$ given by (\ref{eq4}) is said to be in the
class $S_{\Sigma _{m}}(\beta ,\lambda )$ if the following conditions are
satisfied:%
\begin{equation}
\func{Re}\left\{ \frac{1}{2}\left( \frac{zf^{\prime }(z)}{f(z)}+\left( \frac{%
zf^{\prime }(z)}{f(z)}\right) ^{\frac{1}{\lambda }}\right) \right\} >\beta
,\ \ \ \ \ \left( 0\leq \beta <1,\ \ 0<\lambda \leq 1,\ \ z\in U\right) \ 
\label{eq15}
\end{equation}%
and%
\begin{equation}
\func{Re}\left\{ \frac{1}{2}\left( \frac{wg^{\prime }(w)}{g(w)}+\left( \frac{%
wg^{\prime }(w)}{g(w)}\right) ^{\frac{1}{\lambda }}\right) \right\} >\beta
,\ \ \ \ \ \ \left( 0\leq \beta <1,\ \ 0<\lambda \leq 1,\ \ w\in U\right) .
\label{eq16}
\end{equation}%
where the function $g=f^{-1}.$
\end{definition}

\begin{theorem}
Let $\ f$ given by (\ref{eq4}) be in the class $S_{\Sigma _{m}}(\beta
,\lambda ),\ 0\leq \beta <1$. Then 
\begin{equation*}
\left\vert a_{m+1}\right\vert \leq \dfrac{2\lambda }{m}\sqrt{\dfrac{2\left(
1-\beta \right) }{2\lambda ^{2}+\lambda +1}}
\end{equation*}%
and%
\begin{equation*}
\left\vert a_{2m+1}\right\vert \leq \dfrac{8(m+1)\lambda ^{2}\left( 1-\beta
\right) ^{2}}{m^{2}(1+\lambda )^{2}}+\frac{2\lambda \left( 1-\beta \right) }{%
m\left( 1+\lambda \right) }.
\end{equation*}
\end{theorem}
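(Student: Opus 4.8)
The plan is to mirror the proof of Theorem 3 almost verbatim, the only structural change being how the defining geometric condition is encoded analytically. Whereas Theorem 3 writes the averaged expression as a power $\left[ p(z)\right] ^{\alpha }$ of a function with positive real part, the condition $\operatorname{Re}\{\cdots \}>\beta $ of (\ref{eq15})--(\ref{eq16}) should be encoded \emph{affinely}: I would set
\begin{equation*}
\frac{1}{2}\left( \frac{zf^{\prime }(z)}{f(z)}+\left( \frac{zf^{\prime }(z)}{f(z)}\right) ^{\frac{1}{\lambda }}\right) =\beta +(1-\beta )p(z)
\end{equation*}
and, with $g=f^{-1}$, the analogous relation $\tfrac{1}{2}(\cdots )=\beta +(1-\beta )q(w)$, where $p,q\in P$ have the expansions $p(z)=1+p_{m}z^{m}+p_{2m}z^{2m}+\cdots $ and $q(w)=1+q_{m}w^{m}+q_{2m}w^{2m}+\cdots $. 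The left-hand sides have exactly the Taylor expansions already computed for Theorem 3 (that computation does not see $\alpha $ or $\beta $), so the coefficients of $z^{m}$ and $z^{2m}$ are again governed by $\frac{m(1+\lambda )}{2\lambda }a_{m+1}$ and by $\frac{m(1+\lambda )}{2\lambda }(2a_{2m+1}-a_{m+1}^{2})+\frac{m^{2}(1-\lambda )}{4\lambda ^{2}}a_{m+1}^{2}$, with the $g$-side producing the $(2m+1)a_{m+1}^{2}-2a_{2m+1}$ analogue as in (\ref{eq11}). The decisive simplification is that the right-hand side is now \emph{linear} in $p$, so its $z^{m}$ and $z^{2m}$ coefficients are merely $(1-\beta )p_{m}$ and $(1-\beta )p_{2m}$; the quadratic correction $\tfrac{\alpha (\alpha -1)}{2}p_{m}^{2}$ of Theorem 3 is absent.

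Equating the $z^{m}$ and $w^{m}$ coefficients gives $\frac{m(1+\lambda )}{2\lambda }a_{m+1}=(1-\beta )p_{m}$ and $-\frac{m(1+\lambda )}{2\lambda }a_{m+1}=(1-\beta )q_{m}$, whence $p_{m}=-q_{m}$ together with the analogue of (\ref{eq13}),
\begin{equation*}
\frac{m^{2}(1+\lambda )^{2}}{2\lambda ^{2}}a_{m+1}^{2}=(1-\beta )^{2}(p_{m}^{2}+q_{m}^{2}).
\end{equation*}
Next I would equate the $z^{2m}$ and $w^{2m}$ coefficients and \emph{add} the two resulting relations; the $a_{2m+1}$ terms cancel, and because there is no quadratic $p$-term to reintroduce, one obtains directly
\begin{equation*}
\frac{m^{2}(2\lambda ^{2}+\lambda +1)}{2\lambda ^{2}}a_{m+1}^{2}=(1-\beta )(p_{2m}+q_{2m}).
\end{equation*}
Applying Lemma 1 with $|p_{2m}|,|q_{2m}|\leq 2$ then yields $|a_{m+1}|\leq \frac{2\lambda }{m}\sqrt{\frac{2(1-\beta )}{2\lambda ^{2}+\lambda +1}}$, which is exactly the stated first bound.

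For the second coefficient I would instead \emph{subtract} the $z^{2m}$ and $w^{2m}$ relations, using $p_{m}=-q_{m}$, to get
\begin{equation*}
\frac{2m(1+\lambda )}{\lambda }a_{2m+1}-\frac{m(m+1)(1+\lambda )}{\lambda }a_{m+1}^{2}=(1-\beta )(p_{2m}-q_{2m}),
\end{equation*}
so that $a_{2m+1}=\frac{m+1}{2}a_{m+1}^{2}+\frac{\lambda (1-\beta )}{2m(1+\lambda )}(p_{2m}-q_{2m})$. Here $a_{m+1}^{2}$ must be bounded via the $(p_{m}^{2}+q_{m}^{2})$ relation of the previous paragraph, giving $|a_{m+1}^{2}|\leq \frac{16\lambda ^{2}(1-\beta )^{2}}{m^{2}(1+\lambda )^{2}}$; combined with Lemma 1 applied to $p_{2m}-q_{2m}$ this produces the claimed estimate. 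The computations are routine once the expansions are in place, so the only real hazard is bookkeeping: one must use the symmetric \emph{sum} $(p_{2m}+q_{2m})$ to get the sharper denominator $2\lambda ^{2}+\lambda +1$ for $|a_{m+1}|$, but fall back on the $(p_{m}^{2}+q_{m}^{2})$ relation (with its $(1+\lambda )^{2}$ denominator) for $|a_{2m+1}|$. Interchanging these two representations of $a_{m+1}^{2}$ is the easiest way to land on a constant that fails to match the statement, and verifying that the linear right-hand side indeed collapses Theorem 3's intricate denominator to $2\lambda ^{2}+\lambda +1$ is the step I would check most carefully.
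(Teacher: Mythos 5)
Your proposal is correct and follows the paper's own proof essentially verbatim: the same affine encoding $\beta+(1-\beta)p(z)$, the same coefficient relations, adding the $z^{2m}$ and $w^{2m}$ equations to get the $2\lambda^{2}+\lambda+1$ denominator for $\left\vert a_{m+1}\right\vert$, and subtracting them together with the $(p_{m}^{2}+q_{m}^{2})$ relation and Lemma 1 for $\left\vert a_{2m+1}\right\vert$. All constants check out, including $\left\vert a_{m+1}^{2}\right\vert \leq \frac{16\lambda^{2}(1-\beta)^{2}}{m^{2}(1+\lambda)^{2}}$ and the final bound.
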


\begin{proof}
Let $\ f\in S_{\Sigma _{m}}(\beta ,\lambda ).$ Then%
\begin{equation}
\frac{1}{2}\left( \frac{zf^{\prime }(z)}{f(z)}+\left( \frac{zf^{\prime }(z)}{%
f(z)}\right) ^{\frac{1}{\lambda }}\right) =\beta +(1-\beta )p(z)
\label{eq17}
\end{equation}%
\begin{equation}
\frac{1}{2}\left( \frac{wg^{\prime }(w)}{g(w)}+\left( \frac{wg^{\prime }(w)}{%
g(w)}\right) ^{\frac{1}{\lambda }}\right) =\beta +(1-\beta )q(w)
\label{eq18}
\end{equation}%
where $p,q\in P$ and $g=f$ $^{-1}.$

It follows from (\ref{eq17}) and (\ref{eq18}) that%
\begin{equation}
\frac{m(1+\lambda )}{2\lambda }a_{m+1}=(1-\beta )p_{m},  \label{eq19}
\end{equation}%
\begin{equation}
\frac{m(1+\lambda )}{2\lambda }\left( 2a_{2m+1}-a_{m+1}^{2}\right) +\frac{%
m^{2}(1-\lambda )}{4\lambda ^{2}}a_{m+1}^{2}=(1-\beta )p_{2m},  \label{eq20}
\end{equation}%
and%
\begin{equation}
-\frac{m(1+\lambda )}{2\lambda }a_{m+1}=(1-\beta )q_{m},  \label{eq21}
\end{equation}%
\begin{equation}
\frac{m(1+\lambda )}{2\lambda }\left[ (2m+1)a_{m+1}^{2}-2a_{2m+1}\right] +%
\frac{m^{2}(1-\lambda )}{4\lambda ^{2}}a_{m+1}^{2}=(1-\beta )q_{2m}.
\label{eq22}
\end{equation}%
From (\ref{eq19}) and (\ref{eq21}) we obtain%
\begin{equation}
p_{m}=-q_{m}.  \label{eq23}
\end{equation}%
and%
\begin{equation}
\frac{m^{2}(1+\lambda )^{2}}{2\lambda ^{2}}a_{m+1}^{2}=(1-\beta
)^{2}(p_{m}^{2}+q_{m}^{2}).  \label{eq24}
\end{equation}%
Adding (\ref{eq20}) and (\ref{eq22}), we have%
\begin{equation*}
\left[ \frac{m^{2}(1+\lambda )}{\lambda }+\frac{m^{2}\left( 1-\lambda
\right) }{2\lambda ^{2}}\right] a_{m+1}^{2}=(1-\beta )\left(
p_{2m}+q_{2m}\right) .
\end{equation*}%
Therefore, we obtain%
\begin{equation*}
a_{m+1}^{2}=\dfrac{2\lambda ^{2}(1-\beta )\left( p_{2m}+q_{2m}\right) }{%
m^{2}(2\lambda ^{2}+\lambda +1)}.
\end{equation*}%
Appying Lemma 1 for the coefficients $p_{2m}$ and $q_{2m}$, we obtain%
\begin{equation*}
\left\vert a_{m+1}\right\vert \leq \dfrac{2\lambda }{m}\sqrt{\dfrac{2\left(
1-\beta \right) }{2\lambda ^{2}+\lambda +1}}.
\end{equation*}%
Next, in order to find the bound on $\left\vert a_{2m+1}\right\vert ,$ by
subtracting (\ref{eq22}) from (\ref{eq20}), we obtain%
\begin{equation*}
\dfrac{2m(1+\lambda )}{\lambda }a_{2m+1}-\dfrac{m(m+1)(1+\lambda )}{\lambda }%
a_{m+1}^{2}=(1-\beta )\left( p_{2m}-q_{2m}\right) .
\end{equation*}%
Then, in view of (\ref{eq23}) and (\ref{eq24}) , appying Lemma 1 for the
coefficients $p_{2m},p_{m}$ and $q_{2m},q_{m},$ we have%
\begin{equation*}
\left\vert a_{2m+1}\right\vert \leq \dfrac{8(m+1)\lambda ^{2}\left( 1-\beta
\right) ^{2}}{m^{2}(1+\lambda )^{2}}+\frac{2\lambda \left( 1-\beta \right) }{%
m\left( 1+\lambda \right) }.
\end{equation*}%
This completes the proof of Theorem 5.
\end{proof}

If we set $\lambda =1$ in Theorems 3 and 5, then the classes $S_{\Sigma
_{m}}(\alpha ,\lambda )$ and $S_{\Sigma _{m}}(\beta ,\lambda )$ reduce to
the classes $S_{\Sigma _{m}}^{\alpha }$ and $S_{\Sigma _{m}}^{\beta }$ and
thus, we obtain following corollaries:

\begin{corollary}
( see \cite{Altinkaya and Yalcin 2015b}) Let $\ f$ given by (\ref{eq4}) be
in the class $S_{\Sigma _{m}}^{\alpha }\ \ \left( 0<\alpha \leq 1\right) $.
Then 
\begin{equation*}
\left\vert a_{m+1}\right\vert \leq \frac{2\alpha }{m\sqrt{\alpha +1}}
\end{equation*}%
and%
\begin{equation*}
\left\vert a_{2m+1}\right\vert \leq \frac{\alpha }{m}+\frac{2(m+1)\alpha ^{2}%
}{m^{2}}.
\end{equation*}
\end{corollary}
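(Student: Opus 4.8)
The plan is to obtain this corollary as the special case $\lambda = 1$ of Theorem 3. First I would observe that setting $\lambda = 1$ in the defining inequalities of $S_{\Sigma_m}(\alpha, \lambda)$ collapses the averaged expression $\frac{1}{2}\left( \frac{zf'(z)}{f(z)} + \left(\frac{zf'(z)}{f(z)}\right)^{1/\lambda}\right)$ to $\frac{zf'(z)}{f(z)}$ itself, so that $S_{\Sigma_m}(\alpha, 1)$ is precisely the class $S_{\Sigma_m}^{\alpha}$ of strongly bi-starlike $m$-fold symmetric functions of order $\alpha$ named just before the corollary. Consequently every bound established in Theorem 3 holds verbatim once $\lambda$ is replaced by $1$, and the remaining task is purely arithmetic simplification.

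For the bound on $|a_{m+1}|$, I would substitute $\lambda = 1$ into
\[
|a_{m+1}| \leq \frac{4\lambda\alpha}{m\sqrt{(1+\lambda)\left[4\lambda\alpha + (1+\lambda)(1-\alpha)\right] + 2\alpha(1-\lambda)}}.
\]
The term $2\alpha(1-\lambda)$ then vanishes, and the bracketed quantity under the radical becomes $2\left[4\alpha + 2(1-\alpha)\right] = 2(2\alpha + 2) = 4(\alpha+1)$, whose square root is $2\sqrt{\alpha+1}$. Combined with the numerator $4\alpha$ this yields $|a_{m+1}| \leq \frac{2\alpha}{m\sqrt{\alpha+1}}$, exactly as asserted.

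For the bound on $|a_{2m+1}|$, substituting $\lambda = 1$ into
\[
|a_{2m+1}| \leq \frac{2\lambda\alpha}{m(1+\lambda)} + \frac{8(m+1)\lambda^2\alpha^2}{m^2(1+\lambda)^2}
\]
produces the first term $\frac{2\alpha}{2m} = \frac{\alpha}{m}$ and the second term $\frac{8(m+1)\alpha^2}{4m^2} = \frac{2(m+1)\alpha^2}{m^2}$, reproducing the stated estimate. I do not expect any genuine mathematical obstacle here: the whole content of the corollary is the substitution $\lambda = 1$ followed by elementary cancellation, and the only point deserving an explicit line is the class identification $S_{\Sigma_m}(\alpha, 1) = S_{\Sigma_m}^{\alpha}$, which justifies importing Theorem 3 in the first place.
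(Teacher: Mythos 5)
Your proposal is correct and coincides with the paper's own derivation: the corollary is obtained exactly by setting $\lambda =1$ in Theorem 3, noting that the averaged expression collapses to $\frac{zf^{\prime }(z)}{f(z)}$ so that $S_{\Sigma _{m}}(\alpha ,1)=S_{\Sigma _{m}}^{\alpha }$, and then simplifying. Your arithmetic at $\lambda =1$ (the radicand reducing to $4(\alpha +1)$ and the two terms of the $\left\vert a_{2m+1}\right\vert $ bound becoming $\frac{\alpha }{m}$ and $\frac{2(m+1)\alpha ^{2}}{m^{2}}$) is accurate.
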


\begin{corollary}
( see \cite{Altinkaya and Yalcin 2015b}) Let $\ f$ given by (\ref{eq4}) be
in the class $S_{\Sigma _{m}}^{\beta }\ \left( 0\leq \beta <1\right) $. Then 
\begin{equation*}
\left\vert a_{m+1}\right\vert \leq \frac{\sqrt{2\left( 1-\beta \right) }}{m}
\end{equation*}%
and%
\begin{equation*}
\left\vert a_{2m+1}\right\vert \leq \frac{2(m+1)(1-\beta )^{2}}{m^{2}}+\frac{%
1-\beta }{m}.
\end{equation*}
\end{corollary}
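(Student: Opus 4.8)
The plan is to derive this corollary as an immediate specialization of Theorem 5 at $\lambda = 1$, not to repeat the coefficient computation. First I would verify the class identification: when $\lambda = 1$ the two summands $\frac{zf'(z)}{f(z)}$ and $\left(\frac{zf'(z)}{f(z)}\right)^{1/\lambda}$ in (\ref{eq15}) coincide, so the defining condition becomes simply $\mathrm{Re}\{zf'(z)/f(z)\} > \beta$, and likewise (\ref{eq16}) becomes $\mathrm{Re}\{wg'(w)/g(w)\} > \beta$ with $g = f^{-1}$; this is exactly the definition of $S_{\Sigma_m}^{\beta}$. Hence every $f \in S_{\Sigma_m}^{\beta}$ is precisely the $\lambda = 1$ case of a function in $S_{\Sigma_m}(\beta,\lambda)$, and the two estimates of Theorem 5 hold for it verbatim.

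Next I would substitute $\lambda = 1$ into the first bound. The radicand $\frac{2(1-\beta)}{2\lambda^2 + \lambda + 1}$ becomes $\frac{2(1-\beta)}{4} = \frac{1-\beta}{2}$ and the prefactor $\frac{2\lambda}{m}$ becomes $\frac{2}{m}$, so that $\frac{2}{m}\sqrt{\frac{1-\beta}{2}} = \frac{\sqrt{2(1-\beta)}}{m}$, which is the claimed bound on $\left\vert a_{m+1}\right\vert$.

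Then I would repeat the substitution in the second bound. Here $\frac{8(m+1)\lambda^2(1-\beta)^2}{m^2(1+\lambda)^2}$ collapses to $\frac{8(m+1)(1-\beta)^2}{4m^2} = \frac{2(m+1)(1-\beta)^2}{m^2}$, while $\frac{2\lambda(1-\beta)}{m(1+\lambda)}$ becomes $\frac{2(1-\beta)}{2m} = \frac{1-\beta}{m}$; summing these two pieces yields the asserted estimate for $\left\vert a_{2m+1}\right\vert$.

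There is no genuine obstacle in this argument, since all the analytic work already lives in Theorem 5. The only things that need care are the two algebraic simplifications, and in particular the clean collapse of the square root: it is worth recording explicitly that $2\lambda^2 + \lambda + 1$ equals $4$ at $\lambda = 1$, which is what lets the radical combine into the tidy form $\frac{\sqrt{2(1-\beta)}}{m}$. The one conceptual step, rather than a difficulty, is making the class identification above explicit so that invoking Theorem 5 is justified.
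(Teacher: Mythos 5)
Your proposal is correct and is exactly the paper's own route: the paper obtains this corollary by setting $\lambda = 1$ in Theorem 5, noting that $S_{\Sigma_m}(\beta,\lambda)$ then reduces to $S_{\Sigma_m}^{\beta}$, and your algebraic simplifications (in particular $2\lambda^2+\lambda+1 = 4$ at $\lambda=1$) check out. Your explicit verification of the class identification is a small bonus of rigor the paper merely asserts, but it is the same argument.
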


The classes $S_{\Sigma _{m}}^{\alpha }$ and $S_{\Sigma _{m}}^{\beta }$ are
respectively defined as follows:

\begin{definition}
(see \cite{Altinkaya and Yalcin 2015b}) A function $f\in \Sigma _{m}$ given
by (\ref{eq4}) is said to be in the class $S_{\Sigma _{m}}^{\alpha }$ if the
following conditions are satisfied:%
\begin{equation*}
\begin{array}{cc}
f\in \Sigma ,\ \ \left\vert \arg \left( \dfrac{zf^{\prime }(z)}{f(z)}\right)
\right\vert <\dfrac{\alpha \pi }{2} & \left( 0<\alpha \leq 1,\ z\in U\right)%
\end{array}%
\ 
\end{equation*}%
and%
\begin{equation*}
\begin{array}{cc}
\left\vert \arg \left( \dfrac{wg^{\prime }(w)}{g(w)}\right) \right\vert <%
\dfrac{\alpha \pi }{2} & \left( 0<\alpha \leq 1,\ w\in U\right)%
\end{array}%
\end{equation*}%
where the function $g=f^{-1}.$
\end{definition}

\begin{definition}
( see \cite{Altinkaya and Yalcin 2015b}) A function $f\in \Sigma _{m}$ given
by (\ref{eq4}) is said to be in the class $S_{\Sigma _{m}}^{\beta }$ if the
following conditions are satisfied:%
\begin{equation*}
\begin{array}{cc}
f\in \Sigma ,\ \ \func{Re}\left( \dfrac{zf^{\prime }(z)}{f(z)}\right) >\beta
& \left( 0\leq \beta <1,\ \ z\in U\right)%
\end{array}%
\ 
\end{equation*}%
and%
\begin{equation*}
\begin{array}{cc}
\func{Re}\left( \dfrac{wg^{\prime }(w)}{g(w)}\right) >\beta & \left( 0\leq
\beta <1,\ \ w\in U\right)%
\end{array}%
\end{equation*}%
where the function $g=f^{-1}.$
\end{definition}

For \textit{one}-fold symmetric bi-univalent functions and $\lambda =1$,
Theorem 3 and Theorem 5 reduce to Corollary 10 and Corollary 11,
respectively, which were proven earlier by Murugunsundaramoorthy et al. \cite%
{Murugusundaramoorthy 2013}.

\begin{corollary}
Let $\ f$ given by (\ref{eq4}) be in the class $S_{\Sigma }^{\ast }(\alpha
)\ \ \left( 0<\alpha \leq 1\right) $. Then 
\begin{equation*}
\left\vert a_{2}\right\vert \leq \frac{2\alpha }{\sqrt{\alpha +1}}
\end{equation*}%
and%
\begin{equation*}
\left\vert a_{3}\right\vert \leq 4\alpha ^{2}+\alpha .
\end{equation*}
\end{corollary}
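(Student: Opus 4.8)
The plan is to derive Corollary 10 as the special case $m=1$, $\lambda =1$ of Theorem 3, rather than to rerun the full coefficient machinery. First I would check that these substitutions really do specialize the class. Setting $\lambda =1$ in the defining inequalities of $S_{\Sigma _{m}}(\alpha ,\lambda )$ makes the two copies of $zf^{\prime }(z)/f(z)$ (respectively $wg^{\prime }(w)/g(w)$) inside the bracket coincide, since $(zf^{\prime }(z)/f(z))^{1/\lambda }=zf^{\prime }(z)/f(z)$; thus the average $\tfrac{1}{2}(\cdot +(\cdot )^{1/\lambda })$ collapses to $zf^{\prime }(z)/f(z)$ itself, and the condition reduces to $\left\vert \arg (zf^{\prime }(z)/f(z))\right\vert <\alpha \pi /2$ together with its analogue for $g=f^{-1}$. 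This is precisely the bi-starlike condition defining $S_{\Sigma }^{\ast }(\alpha )$. Taking $m=1$ in addition identifies $a_{m+1}$ with $a_{2}$ and $a_{2m+1}$ with $a_{3}$, so that $S_{\Sigma _{1}}(\alpha ,1)=S_{\Sigma }^{\ast }(\alpha )$ and the conclusions of Theorem 3 apply verbatim to any $f\in S_{\Sigma }^{\ast }(\alpha )$.

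The second step is a direct substitution $m=1$, $\lambda =1$ into the two bounds furnished by Theorem 3. For the first coefficient, the radicand $(1+\lambda )[4\lambda \alpha +(1+\lambda )(1-\alpha )]+2\alpha (1-\lambda )$ becomes $2[4\alpha +2(1-\alpha )]=4(\alpha +1)$, whose square root is $2\sqrt{\alpha +1}$; dividing the numerator $4\lambda \alpha =4\alpha $ by $m\cdot 2\sqrt{\alpha +1}=2\sqrt{\alpha +1}$ gives the claimed bound $\left\vert a_{2}\right\vert \leq 2\alpha /\sqrt{\alpha +1}$. For the second coefficient, the two summands $2\lambda \alpha /[m(1+\lambda )]$ and $8(m+1)\lambda ^{2}\alpha ^{2}/[m^{2}(1+\lambda )^{2}]$ reduce to $\alpha $ and $4\alpha ^{2}$ respectively, yielding $\left\vert a_{3}\right\vert \leq 4\alpha ^{2}+\alpha $.

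Since these are elementary simplifications of already-established estimates, there is no genuine obstacle. The only point meriting a moment of care is confirming that the $\lambda \to 1$ reduction of the defining inequalities really recovers the standard bi-starlike condition, and that the $m=1$ normalization aligns the subscripts correctly; once this is verified, the coefficient relations (\ref{eq8})--(\ref{eq14}) together with Lemma 1 underlying Theorem 3 transfer to the present setting without any modification.
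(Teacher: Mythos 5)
Your proposal is correct and takes exactly the paper's route: the paper obtains this corollary precisely by specializing Theorem 3 to $m=1$ and $\lambda =1$ (noting the class reduces to $S_{\Sigma }^{\ast }(\alpha )$), and your arithmetic checks out, since the radicand collapses to $2\left[ 4\alpha +2(1-\alpha )\right] =4(\alpha +1)$ and the two summands in the second bound to $\alpha $ and $4\alpha ^{2}$. Your verification that $\lambda =1$ collapses the averaged expression to $zf^{\prime }(z)/f(z)$, recovering the strongly bi-starlike condition $\left\vert \arg \left( zf^{\prime }(z)/f(z)\right) \right\vert <\alpha \pi /2$, is the same identification the paper makes implicitly, so nothing is missing.
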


\begin{corollary}
Let $\ f$ given by (\ref{eq4}) be in the class $S_{\Sigma }^{\ast }(\beta )\
\left( 0\leq \beta <1\right) $. Then 
\begin{equation*}
\left\vert a_{2}\right\vert \leq \sqrt{2\left( 1-\beta \right) }
\end{equation*}%
and%
\begin{equation*}
\left\vert a_{3}\right\vert \leq 4(1-\beta )^{2}+(1-\beta ).
\end{equation*}
\end{corollary}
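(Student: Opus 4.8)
The plan is to read off this corollary as the one-fold symmetric specialization of Theorem 5, so that no fresh coefficient estimation is needed; the entire argument reduces to a class identification together with the arithmetic of setting $m=1$ and $\lambda =1$ in the bounds already established.

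First I would confirm that $S_{\Sigma }^{\ast }(\beta )$ is exactly the class $S_{\Sigma _{1}}(\beta ,1)$ of Definition 4. Putting $\lambda =1$ makes the exponent $1/\lambda $ equal to $1$, so the averaged expression in Definition 4 collapses,
\[
\frac{1}{2}\left( \frac{zf^{\prime }(z)}{f(z)}+\left( \frac{zf^{\prime }(z)}{f(z)}\right) ^{1/\lambda }\right) =\frac{zf^{\prime }(z)}{f(z)},
\]
and similarly for $g=f^{-1}$. Hence the two defining inequalities (\ref{eq15}) and (\ref{eq16}) become $\operatorname{Re}\bigl(zf^{\prime }(z)/f(z)\bigr)>\beta $ and $\operatorname{Re}\bigl(wg^{\prime }(w)/g(w)\bigr)>\beta $. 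Taking in addition $m=1$, where $\Sigma _{1}=\Sigma $ and the relevant coefficients are $a_{m+1}=a_{2}$ and $a_{2m+1}=a_{3}$, this is precisely the bi-starlikeness of order $\beta $ that defines $S_{\Sigma }^{\ast }(\beta )$. Therefore every such $f$ satisfies the hypotheses of Theorem 5 with $m=1$, $\lambda =1$.

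It then remains only to simplify the two estimates of Theorem 5 at these parameter values. Substituting $m=1$ and $\lambda =1$ one finds
\[
\frac{2\lambda }{m}\sqrt{\frac{2(1-\beta )}{2\lambda ^{2}+\lambda +1}}\;\longrightarrow \;2\sqrt{\frac{2(1-\beta )}{4}}=\sqrt{2(1-\beta )},
\]
since $2\lambda ^{2}+\lambda +1=4$ at $\lambda =1$, and
\[
\frac{8(m+1)\lambda ^{2}(1-\beta )^{2}}{m^{2}(1+\lambda )^{2}}+\frac{2\lambda (1-\beta )}{m(1+\lambda )}\;\longrightarrow \;\frac{16(1-\beta )^{2}}{4}+\frac{2(1-\beta )}{2}=4(1-\beta )^{2}+(1-\beta ),
\]
which are exactly the claimed bounds for $|a_{2}|$ and $|a_{3}|$. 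Because the real work is already contained in Theorem 5, there is no genuine obstacle here; the only thing demanding care is checking that the factors $2\lambda ^{2}+\lambda +1$ and $(1+\lambda )^{2}$ both collapse to $4$ at $\lambda =1$, which is what produces the clean constants. Equivalently, one may bypass Theorem 5 and simply set $m=1$ in Corollary 7, which already records the $\lambda =1$ case, yielding the same two displayed values at once.
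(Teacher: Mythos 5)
Your proof is correct and follows exactly the paper's own route: the paper obtains this corollary by setting $m=1$ and $\lambda =1$ in Theorem 5 (equivalently, $m=1$ in Corollary 7), just as you do. Your class identification $S_{\Sigma }^{\ast }(\beta )=S_{\Sigma _{1}}(\beta ,1)$ and the arithmetic with $2\lambda ^{2}+\lambda +1=4$ and $(1+\lambda )^{2}=4$ are both accurate.
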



\begin{thebibliography}{99}
\bibitem{Altinkaya and Yalcin 2014} Alt\i nkaya, \c{S}., Yal\c{c}\i n, S.:
Initial coefficient bounds for a general class of bi-univalent functions,
International Journal of Analysis, Article ID 867871, 4 pp, (2014).

\bibitem{Altinkaya and Yalcin 2015} Alt\i nkaya, \c{S}., Yal\c{c}\i n, S.:
Coefficient Estimates for Two New Subclasses of Bi-univalent Functions with
respect to Symmetric Points, Journal of Function Spaces, Article ID 145242,5
pp, (2015). 

\bibitem{Altinkaya and Yalcin 2015a} Alt\i nkaya, \c{S}., Yal\c{c}\i n, S.:
Faber polynomial coefficient bounds for a subclass ofbi-univalent functions,
C.R. Acad. Sci. Paris, Ser.I, 353 (12), 1075--1080, (2015).

\bibitem{Altinkaya and Yalcin 2015b} Alt\i nkaya, \c{S}., Yal\c{c}\i n, S.:
Coefficient bounds for certain subclasses of \textit{m}-fold symmetric
bi-univalent functions, Journal of Mathematics, Article ID 241683, 5 pp,
(2015).

\bibitem{Brannan and Taha 86} Brannan, D.A., Taha, T.S.: On some classes of
bi-univalent functions, Studia Universitatis Babe\c{s}-Bolyai Mathematica,%
\textit{\ }31 (2), 70-77, (1986).

\bibitem{Bulut 2014} Bulut, S.: Faber polynomial coefficient estimates for a
comprehensive subclass of analytic bi-univalent functions, C. R. Acad. Sci.
Paris, Ser. I\textit{,} 352 (6), 479-484, (2014).

\bibitem{Duren 83} Duren, P.L.: Univalent Functions, Grundlehren der
Mathematischen Wissenschaften\textit{,} Springer, New York, USA 259, (1983).

\bibitem{Frasin 2011} Frasin, B.A., Aouf, M.K.: New subclasses of
bi-univalent functions, Applied Mathematics Letters, 24, 1569-1573, (2011).

\bibitem{Hamidi and Jahangiri 2014} Hamidi, S.G., Jahangiri,\textit{\ }J.M.:
Faber polynomial coefficient estimates for analytic bi-close-to-convex
functions, C. R. Acad. Sci. Paris, Ser.I, 352 (1), 17--20, (2014). 

\bibitem{Koepf 89} Koepf, W.: Coefficient of symmetric functions of bounded
boundary rotations, Proc. Amer. Math. Soc., 105, 324-329, (1989).

\bibitem{Magesh and Yamini 2013} Magesh, N., Yamini, J.: Coefficient bounds
for a certain subclass of bi-univalent functions, International Mathematical
Forum\textit{, }8 (27),\textit{\ }1337-1344, (2013).

\bibitem{Murugusundaramoorthy  2013} Murugusundaramoorty, G., Magesh, N.,
Prameela, V.: Coefficient bounds for certain classes of bi-univalent
function, Abstract and Applied Analysis, Article ID 573017, 3 pp, (2013).

\bibitem{Netanyahu 69} Netanyahu, E.: The minimal distance of the image
boundary from the origin and the second coefficient of a univalent function
in $\left\vert z\right\vert <1,$ Archive for Rational Mechanics and
Analysis, 32, 100-112, (1969).

\bibitem{Pommerenke 75} Pommerenke, C.: Univalent Functions\textit{, }%
Vandenhoeck \& Ruprecht, G\"{o}ttingen, (1975).

\bibitem{Srivastava 2010} Srivastava, H.M., Mishra, A.K., Gochhayat, P.:
Certain subclasses of analytic and bi-univalent functions, Applied
Mathematics Letters\textit{, }23 (10), 1188-1192, (2010).

\bibitem{Srivastava 2013} Srivastava, H.M., Bulut, S., \c{C}a\u{g}lar, M., Ya%
\u{g}mur, N.: Coefficient estimates for a general subclass of analytic and
bi-univalent functions, Filomat, 27 (5), 831-842, (2013).

\bibitem{Srivastava 2014} Srivastava, H.M., Sivasubramanian, S., Sivakumar,
R.: Initial coefficient boundss for a subclass of \textit{m}-fold symmetric
bi-univalent functions, Tbilisi Math. J., 7 (2), 1-10, (2014).
\end{thebibliography}
\end{document}